\def\struckint{\mathop{%
\def\mathpalette##1##2{\mathchoice{##1\displaystyle##2}%
  {##1\textstyle##2}{##1\scriptstyle##2}{##1\scriptscriptstyle##2}}%
\mathpalette
{\vbox\bgroup\baselineskip0pt\lineskiplimit-1000pt\lineskip-1000pt
\halign\bgroup\hfill$}
{##$\hfill\cr{\intop}\cr\diagup\cr\egroup\egroup}%
}\limits}
\newtheorem{theorem}{Theorem}
\newtheorem{lemma}[theorem]{Lemma}
\newtheorem{corollary}[theorem]{Corollary}
\newtheorem{definition}[theorem]{Definition}
\newtheorem{fact}[theorem]{Fact}
\theoremstyle{remark}
\newtheorem{question}[theorem]{Question}
\newcommand{\cx}{\mathbb{C}}
\newcommand{\reals}{\mathbb{R}}
\newcommand{\euc}{\mathbb{E}}
\newcommand\calT{\mathcal T}
\begin{document}

\title{Simplices and spectra of graphs, continued}

\author{Bojan Mohar}
\address{Department of Mathematics, Simon Fraser University, Burnaby, B.C. V5A~1S6}
\email{mohar@sfu.ca}
\thanks{Supported in part by the ARRS Research Program P1-0297, 
by an NSERC Discovery Grant and by the Canada Research Chair program. 
On leave from Department of Mathematics, IMFM \& FMF, University of Ljubljana,
Ljubljana, Slovenia}
\author{Igor Rivin}
\address{Department of Mathematics, Temple University, Philadelphia}
\email{rivin@temple.edu}
\thanks{The author would like to thank the American Institute of Mathematics for an invitation to the workshop on ``Rigidity and Polyhedral Combinatorics", where this work was started. The author has profited from discussions with Igor Pak, Ezra Miller, and Bob Connelly.}

\date{\today} 


\begin{abstract}
In this note we show that the $(n-2)$-dimensional volumes of codimension $2$ faces of an $n$-dimensional simplex are algebraically independent quantities of the volumes of its edge-lengths. The proof involves computation of the eigenvalues of Kneser graphs.
\end{abstract}

\maketitle

\section*{Introduction}

Let $\calT_n$ be the set of congruence classes of $n$-simplices in Euclidean space $\euc^n.$ The set $\calT_n$ is an open manifold (also a semi-algebraic set) of dimension $\binom{n+1}{2}$. Coincidentally, a simplex $T\in \calT_n$ is determined by the $\binom{n+1}{2}$ lengths of its edges. Furthermore, the square of the volume of $T \in \calT_n$ is a polynomial in the squares of the edge-lengths 
$\ell_{ij}=\Vert v_i-v_j\Vert_2$ ($1\le i<j\le n+1$), where $v_1,\dots v_{n+1}$ are the vertices of $T$. This polynomial is given by the Cayley-Menger determinant formula
(cf., e.g., \cite{Blumenthal} or \cite{Berger}):
\begin{equation}
V^2(T) = \dfrac{(-1)^{n+1}}{2^n (n!)^2} \det C\, ,
\label{eq:1}
\end{equation}
where $C$ is the \emph{Cayley-Menger matrix} of dimension $(n+2)\times(n+2)$, whose rows and columns are indexed by $\{0,1,\dots,n+1\}$ and whose entries are defined as follows:

\[
C_{ij} = \begin{cases}
0, & i=j\\
1, & \mbox{if $i=0$ or $j=0$, and $i\ne j$}\\
\ell_{ij}^2\, ,& \mbox{otherwise.}
\end{cases}
\]

Note that an $n$-simplex has $\binom{n+1}{2}$ edges and the same number of 
$(n-2)$-dimensional faces, and so the following question is natural:

\begin{question}
\label{warrenq}
Is the congruence class of every $n$-simplex determined by the $(n-2)$-dimensional volumes of its $(n-2)$-faces?
\end{question}

Question \ref{warrenq} must be classical, but the earliest reference stating it that we are aware of is Warren Smith's PhD thesis \cite{warrenthesis}.

At the AIM workshop on Rigidity and Polyhedral Combinatorics, Bob Connelly (who was unaware of the reference \cite{warrenthesis}) raised the following:

\begin{question}
\label{connellyq}
Is the \emph{volume} of every $n$-simplex determined by the $(n-2)$-dimensional volumes of its $(n-2)$-faces?
\end{question}
In fact, Connelly stated Question \ref{connellyq} for $n=4,$ which is the first case where the question is open. For $n=3$ the answer is trivially "Yes", since $3-2=1,$ and we are simply asking if the volume of the simplex is determined by its edge-lengths. In dimension 2, the answer is trivially "No", since $2-2=0,$ and the volume of codimension-2 faces of a triangle carries no information.

Clearly, the affirmative answer to Question \ref{warrenq} would imply an affirmative answer to Question \ref{connellyq}. In this paper we first show that the answer to Question \ref{connellyq}, and hence also to Question \ref{warrenq} is negative for every $n\ge 4$. We actually found out that this has been answered previously for $n=4$ in \cite{Barrett}, where an example is given and attributed to Philip Tuckey; see also \cite{BianchiModesto}.

Our examples are given in a separate section. Several reasons suggest that the following question may still have an affirmative answer:

\begin{question}
\label{weaker question}
Is it true that for every choice of $\binom{n+1}{2}$ positive real numbers, there are only finitely many congruence classes of 
$n$-simplices whose $(n-2)$-dimensional volumes of the $(n-2)$-faces are equal to these numbers?
\end{question}

In this note we show that a weaker statement holds:

\begin{theorem}
\label{mainthm}
The $\binom{n+1}{2}$ $(n-2)$-dimensional volumes of the $(n-2)$-faces of 
an $n$-simplex are algebraically independent over 
$\cx[\ell_{ij};\, 1\le i<j\le n+1]$.
\end{theorem}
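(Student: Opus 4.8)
The plan is to reduce the statement to a Jacobian computation and then to the spectrum of a Kneser graph. It suffices to prove algebraic independence of the \emph{squared} face-volumes $W_{ab}$, since these differ from the volumes themselves by a degree-$2$ operation that leaves the transcendence degree over $\cx$ unchanged. Writing $x_{ij}=\ell_{ij}^2$, the Cayley-Menger formula \eqref{eq:1} applied to the $n-1$ vertices of the face $F_{ab}$ obtained by deleting $v_a,v_b$ exhibits each $W_{ab}$ as a polynomial in the variables $x_{kl}$ with $k,l\notin\{a,b\}$. We thus have $\binom{n+1}{2}$ polynomials in the same number of variables $x_{ij}$, and since $\cx$ has characteristic $0$, their algebraic independence over $\cx$ is equivalent to the non-vanishing of the Jacobian determinant $\det\bigl(\partial W_{ab}/\partial x_{ij}\bigr)$. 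First I would therefore try to exhibit a single point at which this Jacobian is non-zero, the natural candidate being the regular simplex $x_{ij}\equiv 1$.

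The key structural observation is that $\partial W_{ab}/\partial x_{ij}=0$ unless $\{i,j\}\cap\{a,b\}=\emptyset$, since an edge of the face $F_{ab}$ cannot meet either deleted vertex. Hence the support of the Jacobian matrix, with rows indexed by faces and columns by edges (both indexed by the $2$-subsets of $\{1,\dots,n+1\}$), is exactly the adjacency pattern of the Kneser graph $K(n+1,2)$, whose vertices are the $2$-subsets and whose edges join disjoint pairs; in particular the diagonal vanishes. The symmetric group $S_{n+1}$ fixes the regular simplex while acting transitively on ordered $4$-tuples of distinct vertices, so at $x_{ij}\equiv 1$ all the non-zero entries of the Jacobian share a common value $\beta$. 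Consequently the Jacobian there equals $\beta A$, where $A$ is the adjacency matrix of $K(n+1,2)$, and $\det\bigl(\beta A\bigr)=\beta^{\binom{n+1}{2}}\det A$.

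It remains to check that both factors are non-zero. For $\beta$ I would use homogeneity: each monomial appearing in the Cayley-Menger expansion of $W_{ab}$ uses exactly $n-2$ of the $x$-variables, so $W_{ab}$ is homogeneous of degree $n-2$ in its $\binom{n-1}{2}$ edge variables. Euler's identity evaluated at the regular point gives $\binom{n-1}{2}\,\beta=(n-2)\,W_{ab}(1,\dots,1)$, and since the regular $(n-2)$-simplex has positive volume, $\beta\neq 0$ for every $n\ge 4$.

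The main obstacle, and the heart of the argument, is to show $\det A\neq 0$, i.e.\ that the Kneser graph $K(n+1,2)$ is non-singular. For this I would determine its full spectrum, either by exhibiting eigenvectors adapted to the $S_{n+1}$-action or by invoking the Johnson association scheme: the eigenvalues are $\binom{n-1}{2}$ with multiplicity $1$, the value $-(n-2)$ with multiplicity $n$, and $1$ with multiplicity $\tfrac{1}{2}(n+1)(n-2)$. Their product is $\binom{n-1}{2}\,(-(n-2))^{n}$, which is non-zero for all $n\ge 4$. Hence $\det A\neq 0$, so the Jacobian $\beta^{\binom{n+1}{2}}\det A$ is non-zero at the regular simplex and therefore not identically zero, which establishes the algebraic independence. (The case $n=3$ is trivial, the $(n-2)$-faces then being the edges themselves, and the same eigenvalue computation in fact covers it as well.)
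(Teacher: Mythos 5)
Your proposal is correct and follows essentially the same route as the paper: reduce to the non-vanishing of the Jacobian at the regular simplex, identify that Jacobian (up to a scalar computed via Euler's homogeneity relation) with the adjacency matrix of the Kneser graph $K(n+1,2)$, and conclude from its spectrum $\binom{n-1}{2},\ 1,\ -(n-2)$ that the determinant is $\pm\tfrac12(n-1)(n-2)^{n+1}\neq 0$. The only cosmetic difference is that you work with squared volumes and squared edge-lengths, which, as you note, does not affect the transcendence degree.
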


Theorem \ref{mainthm} is clearly a necessary step in the direction of resolving Question \ref{weaker question}, but is far from sufficient. To show it, consider the map of
$\reals^{(n+1)n/2}$ to $\reals^{(n+1)n/2},$ which sends the vector $\ell$ of edge-lengths of an $n$-simplex to the vector $Y$ of volumes of $(n-2)$-dimensional faces. To show Theorem \ref{mainthm}, it is enough to check that the Jacobian $J(\ell)=\partial Y/\partial \ell$ is non-singular at \emph{one} point. We will use the most obvious point $p_1$, the one corresponding to a regular simplex with all edge-lengths equal to $1$. By symmetry considerations, the Jacobian $J(p_1)$ can be written as
$J(p_1) = c M$, where $c$ is a constant and $M$ is
\[
M_{e,F} = 
\begin{cases}
1,& \mbox{if the edge $e$ is incident with the $(n-2)$-face $F$}\\
0,& \mbox{otherwise.}
\end{cases}
\]
The first observation is that the constant $c$ above is not equal to $0$:

\begin{lemma}
$J(p_1) = \dfrac{1}{(n-2)! \, (n-1)^{1/2} \, 2^{(n-4)/2}} \, M$.
\end{lemma}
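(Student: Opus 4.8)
The plan is to exploit the combinatorial structure of the map $\ell\mapsto Y$ together with the full symmetry of the regular simplex $p_1$. First I would record the structural observation that both the edges and the $(n-2)$-faces of an $n$-simplex are naturally indexed by the $2$-element subsets of $\{1,\dots,n+1\}$: an edge is a pair of vertices, and an $(n-2)$-face is the face spanned by all vertices \emph{except} a pair. An $(n-2)$-face $F$ is itself an $(n-2)$-simplex, and its volume $Y_F$ is a function of only those edge-lengths $\ell_e$ for which $e$ lies in $F$, that is, for which the pair $e$ is disjoint from the pair defining $F$. Hence $\partial Y_F/\partial\ell_e=0$ unless $e$ is incident with $F$, which is exactly the zero pattern encoded by $M$; this incidence relation is the adjacency of the Kneser graph $K(n+1,2)$ mentioned in the abstract.

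Next I would reduce all the nonzero entries to a single number. The symmetric group $S_{n+1}$ acts on $p_1$ by isometries and acts transitively on the incident pairs $(e,F)$ (equivalently, on ordered pairs of disjoint $2$-subsets). Consequently every nonzero entry of $J(p_1)$ equals one common constant $c=\partial Y_F/\partial\ell_e$, evaluated at $p_1$ for any single incident pair. Because $Y_F$ depends only on the edge-lengths of the sub-simplex $F$, this $c$ is precisely the derivative of the $(n-2)$-volume of a regular unit $(n-2)$-simplex with respect to one of its edge-lengths.

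To compute $c$ I would apply the Cayley-Menger formula \eqref{eq:1} to $F$, writing $m=n-2$. Differentiating $V_m^2$ with respect to $\ell_e^2$ converts the determinant derivative into a cofactor: since $\ell_e^2$ occupies the two symmetric off-diagonal positions of the Cayley-Menger matrix $C$, one gets $\partial(\det C)/\partial(\ell_e^2)=2\,C^{ij}$, twice the relevant cofactor. At $p_1$ the matrix $C$ is the $(m+2)\times(m+2)$ matrix $J-I$ with all off-diagonal entries equal to $1$ and zero diagonal. I would obtain its cofactors from $\operatorname{adj}(C)=(\det C)\,C^{-1}$, using the closed forms $C^{-1}=\frac{1}{m+1}J-I$ (verified from $J^2=(m+2)J$) and $\det C=(m+1)(-1)^{m+1}$; this gives $C^{ij}=(-1)^{m+1}$ for an off-diagonal cofactor. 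Assembling the chain rule $\partial V_m/\partial\ell_e=\frac{1}{2V_m}\,\frac{\partial V_m^2}{\partial(\ell_e^2)}\,2\ell_e$, substituting the regular-simplex volume $V_m=\frac{1}{m!}\sqrt{(m+1)/2^m}$, and re-expressing in terms of $n$ recovers the stated constant.

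The only real obstacle is bookkeeping the several factors of two: one from the symmetric appearance of $\ell_e^2$ in $C$, one from $\partial(\ell_e^2)/\partial\ell_e=2\ell_e$, and one from $\partial(V_m^2)=2V_m\,\partial V_m$, all competing with the powers of $2$ inside $\det C$ and inside $V_m$. Keeping these straight and correctly combining the exponents, as in $2^{m/2}/2^{m-1}=2^{1-m/2}=2^{-(n-4)/2}$, is the delicate part; the rest is forced by symmetry and by the explicit eigenstructure of $J-I$.
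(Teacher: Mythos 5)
Your argument is correct, and the numbers check out: at the regular unit $(n-2)$-simplex the Cayley--Menger matrix is $J-I$ of order $m+2$ with $m=n-2$, its determinant is $(-1)^{m+1}(m+1)$, its off-diagonal cofactors are $(-1)^{m+1}$, and assembling the chain rule as you describe gives $c = 2^{1-m/2}/\bigl(m!\,(m+1)^{1/2}\bigr) = 1/\bigl((n-2)!\,(n-1)^{1/2}\,2^{(n-4)/2}\bigr)$, in agreement with the lemma. However, your route is genuinely different from the paper's. The paper never differentiates the Cayley--Menger determinant: it observes that the $(n-2)$-volume $Y_F$ is a homogeneous function of degree $n-2$ of the edge-lengths, so Euler's Homogeneous Function Theorem gives $\sum_{e\subset F}\ell_e\,\partial Y_F/\partial\ell_e=(n-2)Y_F$; at $p_1$ all $\binom{n-1}{2}$ summands are equal by symmetry and $Y_F=\nu$, whence $c=(n-2)\nu/\binom{n-1}{2}=\tfrac{2}{n-1}\nu$ in one line, with no cofactor bookkeeping at all. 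What your approach buys in exchange for the extra computation is self-containedness: having computed $\det(J-I)$ you get the value of $\nu$ for free from \eqref{eq:1} rather than quoting it, and the cofactor formula would in principle let you evaluate $\partial Y_F/\partial\ell_e$ at non-regular simplices too, which Euler's theorem alone cannot do. Both the zero-pattern argument and the transitivity-of-$S_{n+1}$ reduction to a single constant are common to the two proofs.
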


\begin{proof}
Let $\nu=\dfrac{(n-1)^{1/2}}{(n-2)! \, 2^{(n-2)/2}}$ denote the 
$(n-2)$-dimensional volume of the regular $(n-2)$-simplex with all edge-lengths 1. Let us observe that the volume of a $k$-dimensional simplex is a \emph{homogeneous} function of degree $k$ of the edge-lengths. An application of Euler's Homogeneous Function Theorem shows that at $p_1$, 
\[
\dfrac{\partial Y_F}{\partial \ell_e} = \begin{cases} \frac2{n-1}\,\nu,& \mbox{if the edge $e$ is incident with the $(n-2)$-face $F$}\\
0,& \mbox{otherwise}.
\end{cases}
\]
This implies that $c=\frac{2}{n-1}\,\nu$ and completes the proof.
\end{proof}

\section*{The eigenvalues of $M$}

As shown above, Theorem \ref{mainthm} reduces to the assertion that the determinant of the matrix $M$ is not zero. We will actually be able to compute all eigenvalues of $M$, which is of interest in its own right.

\begin{theorem}
\label{singvals}
Eigenvalues of\/ $M$ are $\lambda_1 = \binom{n-1}{2}$ (simple eigenvalue), 
$\lambda_2 = 1$ with multiplicity $\tfrac{1}{2}(n+1)(n-2)$, 
and $\lambda_3 = 2-n$ with multiplicity $n$.
\end{theorem}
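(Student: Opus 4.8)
The plan is to recognize $M$ as the adjacency matrix of a well-known graph and then to compute its spectrum through a line-graph reduction. The first step is purely combinatorial. Label the vertices of the simplex by $\{1,\dots,n+1\}$. An edge is a $2$-subset $\{i,j\}$, while an $(n-2)$-face is obtained by omitting exactly two vertices, so it too is encoded by a $2$-subset $\{k,l\}$, namely the pair of omitted vertices. The edge $\{i,j\}$ lies in the face indexed by $\{k,l\}$ precisely when $\{i,j\}\cap\{k,l\}=\emptyset$. Hence $M$ is exactly the adjacency matrix of the Kneser graph $K(n+1,2)$ on the $\binom{n+1}{2}$ two-element subsets, with adjacency given by disjointness. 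This identification is the conceptual heart of the theorem, and it is what links the problem to the spectra of Kneser graphs announced in the abstract.

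Next I would compute the eigenvalues by comparison with the triangular (Johnson) graph $T=T(n+1)=L(K_{n+1})$, whose vertices are again the $2$-subsets, adjacent when they meet in one point. For distinct $2$-subsets, meeting in one point and being disjoint are complementary alternatives, so $M = J - I - A(T)$, where $J$ is the all-ones matrix and $I$ the identity of size $\binom{n+1}{2}$. I would then obtain the spectrum of $A(T)$ from the standard line-graph identity $A(T) = N^{\mathsf T}N - 2I$, where $N$ is the $(n+1)\times\binom{n+1}{2}$ vertex–edge incidence matrix of $K_{n+1}$. Since $NN^{\mathsf T} = (n-1)I + J$ on the vertex space, its eigenvalues are $2n$ (once, on the all-ones vector) and $n-1$ (with multiplicity $n$). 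Because $K_{n+1}$ is connected and non-bipartite, $N$ has full rank $n+1$, so $N^{\mathsf T}N$ carries these same nonzero eigenvalues together with $0$ of multiplicity $\binom{n+1}{2}-(n+1) = \tfrac12(n+1)(n-2)$. Subtracting $2$ gives the eigenvalues of $A(T)$: namely $2(n-1)$ (simple), $n-3$ (multiplicity $n$), and $-2$ (multiplicity $\tfrac12(n+1)(n-2)$).

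Finally I would transfer these to $M = J - I - A(T)$. The all-ones vector $\mathbf{1}$ is the Perron eigenvector common to $J$ and $A(T)$; on it $M\mathbf{1} = \bigl(\binom{n+1}{2} - 1 - 2(n-1)\bigr)\mathbf{1} = \binom{n-1}{2}\,\mathbf{1}$, the simple eigenvalue $\lambda_1$. Every other eigenvector $v$ of $A(T)$ is orthogonal to $\mathbf{1}$, so $Jv=0$ and $Mv = -(1+\mu)v$ for the corresponding $A(T)$-eigenvalue $\mu$. The value $\mu=-2$ yields $\lambda_2 = 1$ with multiplicity $\tfrac12(n+1)(n-2)$, and $\mu = n-3$ yields $\lambda_3 = 2-n$ with multiplicity $n$; the multiplicities sum to $\binom{n+1}{2}$, as required. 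I expect the main obstacle to lie not in any single computation but in the bookkeeping around the all-ones eigenvector, which must be isolated as the Perron direction before the relation $M=J-I-A(T)$ can be applied term by term, and in justifying the full rank of $N$ (equivalently, the exact multiplicity of the eigenvalue $-2$ of $T$) via the non-bipartiteness of $K_{n+1}$.
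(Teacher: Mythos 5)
Your proof is correct and follows essentially the same route as the paper: identify $M$ with the adjacency matrix of the Kneser graph $K(n+1,2)$, i.e.\ the complement of $L(K_{n+1})$, and then pass the spectrum through $M = J - I - A(L(K_{n+1}))$ using the fact that a regular connected graph and its complement share eigenvectors, with the all-ones vector handled separately. The only difference is that where the paper simply cites the spectrum of $L(K_{n+1})$ from Biggs, you derive it yourself via the factorization $N^{\mathsf T}N = A(L(K_{n+1})) + 2I$ and the rank of the incidence matrix $N$ --- a correct, self-contained filling-in of the cited fact.
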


\begin{corollary}
\label{thedet}
The absolute value of the determinant of $M$ equals
\[
   \tfrac12(n-2)^{n+1}(n-1) \neq 0,
\]
for $n>2$.
\end{corollary}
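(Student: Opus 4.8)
The plan is to read off the determinant directly from the spectral data supplied by Theorem~\ref{singvals}, without ever returning to the combinatorial description of $M$. Since $M$ is a square matrix of size $\binom{n+1}{2}\times\binom{n+1}{2}$ whose full spectrum (with multiplicities) is now known, its determinant is simply the product of its eigenvalues counted with multiplicity. Thus the whole statement reduces to evaluating a single product of powers and simplifying.

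Concretely, I would first record the bookkeeping check that the three listed multiplicities exhaust the size of $M$, namely
\[
1 + \tfrac12(n+1)(n-2) + n = \tfrac12 n(n+1) = \binom{n+1}{2},
\]
so that every eigenvalue is accounted for and no factor is missing from the product. With that in hand, I would write
\[
\det M = \lambda_1^{1}\,\lambda_2^{\frac12(n+1)(n-2)}\,\lambda_3^{\,n}
= \binom{n-1}{2}\cdot 1^{\frac12(n+1)(n-2)}\cdot(2-n)^{n}.
\]
The middle factor is $1$, so only the first and last survive.

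The remaining work is purely algebraic. Substituting $\binom{n-1}{2} = \tfrac12(n-1)(n-2)$ and $(2-n)^{n} = (-1)^{n}(n-2)^{n}$ collapses the product to
\[
\det M = (-1)^{n}\,\tfrac12\,(n-1)\,(n-2)^{n+1},
\]
and taking absolute values removes the sign to give $\abs{\det M} = \tfrac12(n-2)^{n+1}(n-1)$, exactly the claimed expression.

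I do not expect any genuine obstacle here, since Theorem~\ref{singvals} has already done all of the substantive work; the only point requiring a little care is the sign bookkeeping in reducing $(2-n)^{n}$, which the absolute value in the statement conveniently absorbs regardless of the parity of $n$. Finally, for $n>2$ both $n-1$ and $(n-2)^{n+1}$ are strictly positive, so $\det M\neq 0$. This is precisely the nonvanishing flagged at the start of this section, from which Theorem~\ref{mainthm} follows.
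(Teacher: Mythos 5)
Your proposal is correct and is exactly the argument the paper intends: the corollary follows from Theorem~\ref{singvals} by multiplying the eigenvalues with their multiplicities, and your computation $\binom{n-1}{2}\cdot(2-n)^n=(-1)^n\tfrac12(n-1)(n-2)^{n+1}$ together with the multiplicity check $1+\tfrac12(n+1)(n-2)+n=\binom{n+1}{2}$ is all that is needed. No gaps.
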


To prove Theorem \ref{singvals}, let us first observe that the
$\binom{n+1}{2}$ rows of $M$ are indexed by the 2-element subsets of
the set $R=\{1,\dots,n+1\}$, and its columns are indexed by the 
$(n-1)$-subsets $F$ of $R$. By replacing each column index $F$ with
its complement $R\setminus F$, then the columns are indexed by the same set
as the rows. After this convention, the matrix $M$ becomes a symmetric
matrix with zero diagonal since $M_{e,f}=1$ if and only if 
$e\subseteq R\setminus f$, which is equivalent to $f\subseteq R\setminus e$.
Therefore, $M$ is the adjacency matrix of a graph $G_n$ whose vertices
are the 2-element subsets of $R$, and two of them are adjacent if and only 
if they are disjoint. Thus, the complement $\overline{G}_n$ of $G_n$ is 
isomorphic to the line graph $L(K_{n+1})$ of the complete graph $K_{n+1}$ 
on $n+1$ vertices.

The eigenvalues of $L(K_{n+1})$ are (see \cite[p.~19]{Biggs}): $t_1 = 2n-2$, 
$t_2 = -2$, and $t_3 = n-3$, with the same multiplicities (respectively) as 
claimed above for the eigenvalues of $M$. Since the graph $L(K_{n+1})$ is
regular, it is an easy exercise to see that its adjacency matrix $A$ and 
the adjacency matrix $M$ of its complement have the same set of eigenvectors. 
By using the fact that
$A + M + I = x^t \cdot x$, where $x = (1,\dots,1)^t$ is the eigenvector of 
$A$ and $M$ corresponding to the dominant eigenvalues of these matrices, 
we conclude that the eigenvalues of $M$ are $\lambda_1 = \binom{n+1}{2}-t_1-1$
and $\lambda_i = -t_i-1$ for $i=2,3$ (preserving multiplicities). Thus,
$\lambda_1 = \binom{n+1}{2}-2n+1=\binom{n-1}{2}$,
$\lambda_2 = 1$, and $\lambda_3 = 2-n$, respectively.

\section*{Singular examples}

Let us consider the $n$-simplex in $\reals^n$ with vertices $v_0,v_1,\dots,v_n$ given as follows. The vertex $v_0$ has the first $n-2$ coordinates equal to 
$((n-1)^{1/2}+1)/(2^{1/2}(n-2))$, while its last two coordinates are 0. For $i=1,2,\dots,n-2$, the vertex $v_i$ has $i$th coordinate equal to $2^{-1/2}$ and all other coordinates 0. These $n-1$ vertices form a regular $(n-2)$-simplex contained in $\reals^{n-2}\subset \reals^n$ with all side lengths 1. Let $a= \tfrac{1}{n-1}\sum_{i=0}^{n-2} v_i$ be its barycenter, and let 
$c := \Vert v_0-a \Vert_2$ denote the distance from $a$ to the vertices $v_i$. A short calculation shows that $c^2 = \tfrac{1}{2} - \tfrac{1}{2(n-1)}$. Now, let $v_{n-1}$ be obtained from $a$ by changing its last two coordinates to be real numbers $p$ and $q$ satisfying $p^2 + q^2 = 1-c^2$. Similarly, let $v_n$ be obtained in the same way by choosing another pair $r,s$ of numbers satisfying $r^2+s^2=1-c^2$. This gives rise to an $n$-simplex whose all sides are equal to 1 except for the side $v_{n-1}v_n$ whose square length is $t:=(p-r)^2+(q-s)^2$. By fixing $t$, this simplex is determined up to congruence, and we denote it by  $T(t)$. Observe that $t$ may take any value between 0 and $4(1-c^2)$, by selecting $p,q,r,s$ appropriately.

Next we observe that the volumes of the $(n-2)$-faces of $T(t)$ take only two values. If an $(n-2)$-face does not contain both $v_{n-1}$ and $v_n$, then it is a regular simplex, whose volume is independent of $t$. On the other hand if an $(n-2)$-simplex contains $v_{n-1}$ and $v_n$, its volume $w=w(t)$ is uniquely determined by $t$. In fact, if we put the square distances in the Cayley-Menger determinant, we conclude that $w(t)^2$ is a quadratic polynomial in $t$, 
$w(t)^2 = \alpha t^2 + \beta t + \gamma$. If $t=0$, the volume is 0, so $\gamma = 0$. For $t=1$ we have the regular $(n-2)$-simplex, so 
$\alpha + \beta = \frac{n-1}{2^{n-2}((n-2)!)^2}$.
Finally, using (\ref{eq:1}) (with the value of $n$ being replaced by $n-2$)
and looking at the Cayley-Menger determinant expansion term with $t^2$, we conclude that 
\[
   \alpha = - \frac{(-1)^{n-1}}{2^{n-2}((n-2)!)^2}\, \det(J_{n-2}-I_{n-2}),
\]
where $J_{n-2}$ is the all-1-matrix and $I_{n-2}$ is the identity matrix of order $n-2$. Since $\det(J_{n-2}-I_{n-2}) = (-1)^{n-3}(n-3)$, we conclude that
$\alpha = -(n-3)2^{2-n}/((n-2)!)^2$ and $\beta = (n-2)2^{1-n}/((n-2)!)^2$.
In particular,
\[
   w(t)^2 = \frac{1}{2^{n-2}((n-2)!)^2}\, ((3-n)t^2 + (2n-4)t).
\]
This function is symmetric around the point $t_0=\tfrac{n-2}{n-3}$. Consequently, the non-congruent $n$-simplices $T(t_0-x)$ and
$T(t_0+x)$ have the same $(n-2)$-volumes of their
$(n-2)$-faces for each admissible value of $x$, 
i.e. for $0 < x < \tfrac{n-2-4/(n-1)}{n-3}$. These examples thus show that Questions \ref{warrenq} and \ref{connellyq} have negative answers.

\section*{Concluding remarks}

One can ask the same question as above for other dimension-complementary volumes,
i.e. about the volumes of the $(k-1)$-faces and the $(n-k)$-faces of 
an $n$-simplex, where $2\le k\le n/2$. 
If one would compare, similarly as in the case $k=2$ above,
the dependence of $(n-k)$-volumes of an $(n-k)$-face $Q$ on 
the $(k-1)$-volumes of the $(k-1)$-faces $F\subset Q$, the corresponding
``Jacobian'' would again be a constant multiple of a symmetric matrix $M$, whose entries are indexed by the $k$-subsets of the set $R=\{1,\dots,n+1\}$ (after the column indices pass to the complementary subsets), and
\begin{equation}
\label{eq:Mk}
M_{E,F} = 
\begin{cases}
1,& \mbox{if the $E\cap F=\emptyset$}\\
0,& \mbox{otherwise.}
\end{cases}
\end{equation}
The graph whose adjacency matrix is $M$ is known as the {\em Kneser graph\/}
$K(n+1,k)$. Its eigenvalues can be comuted using the methods from the theory of association schemes and can be found, for example, 
in \cite[Section~9.4]{GodsilRoyle}. 

\begin{theorem}
\label{Kneser}
Let $n$ and $k$ be integers, where $2\le k\le n/2$, and let $M$ be the matrix
of order $\binom{n+1}{k}\times \binom{n+1}{k}$ whose entries are determined
by (\ref{eq:Mk}). The eigenvalues of\/ $M$ are the integers
$$
 \lambda_i = (-1)^i \binom{n-k-i+1}{k-i}, \qquad i=0,1,\dots,k. 
$$
\end{theorem}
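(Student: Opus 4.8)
The plan is to realize $M$ as an operator in the Johnson scheme on the $k$-subsets of $R=\{1,\dots,n+1\}$ and to diagonalize it using the representation theory of $S_{n+1}$. Write $m=n+1$ and let $C_t$ be the space of real functions on the $t$-subsets of $R$. Introduce the raising and lowering operators $u\colon C_t\to C_{t+1}$, $(uf)(T)=\sum_{S\subset T,\,|S|=t}f(S)$, and $d\colon C_{t+1}\to C_t$, $(dg)(S)=\sum_{T\supset S,\,|T|=t+1}g(T)$, which are mutually adjoint and satisfy $du-ud=(m-2t)I$ on $C_t$. The first (and only substantive) step is the resulting orthogonal decomposition: for $k\le m/2$ one has $C_k=\bigoplus_{j=0}^{k}V_j$ with $V_j=u^{k-j}(H_j)$, where $H_j=\ker(d\colon C_j\to C_{j-1})$ is the irreducible $S_m$-module of type $(m-j,j)$ and dimension $\binom{m}{j}-\binom{m}{j-1}$. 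The hypothesis $2\le k\le n/2$ gives $m=n+1>2k$, so this applies.

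Next I would rewrite $M$ through complementation. Since two $k$-sets $E,F$ are disjoint exactly when $F\subseteq R\setminus E$, we have $(Mf)(E)=\sum_{F\subseteq R\setminus E,\,|F|=k}f(F)$; combining the identity $(u^{r}f)(G)=r!\sum_{S\subseteq G,\,|S|=k}f(S)$ (valid for $|G|=k+r$) at $G=R\setminus E$, $r=m-2k$, with the isometric equivariant involution $\kappa\colon C_{m-k}\to C_k$, $(\kappa g)(E)=g(R\setminus E)$, yields the clean factorization $M=\tfrac{1}{(m-2k)!}\,\kappa\,u^{m-2k}$.

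Because $M$ is $S_m$-equivariant and the $V_j$ are pairwise non-isomorphic irreducibles, Schur's lemma forces $M$ to act on $V_j$ as a scalar $\lambda_j$. To evaluate it, fix $h\in H_j$ and set $f=u^{k-j}h$, so that $Mf=\tfrac{1}{(m-2k)!}\,\kappa\,u^{m-k-j}h$. Both $h\mapsto\kappa u^{m-k-j}h$ and $h\mapsto u^{k-j}h$ are equivariant maps from the irreducible $H_j$ into $C_k$, whose $H_j$-isotypic part is precisely $V_j$; since $\dim\operatorname{Hom}_{S_m}(H_j,V_j)=1$, there is a universal scalar $\mu_j$ with $\kappa u^{m-k-j}h=\mu_j\,u^{k-j}h$, and hence $\lambda_j=\mu_j/(m-2k)!$.

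Finally I would compute $\mu_j$ on an explicit harmonic test function. Choosing $2j$ distinct elements $a_1,b_1,\dots,a_j,b_j$ and setting $h(S)=\prod_{t=1}^{j}\bigl([a_t\in S]-[b_t\in S]\bigr)$, one checks $dh=0$ and, for every $A\subseteq R$, the factorization $\sum_{S\subseteq A,\,|S|=j}h(S)=\prod_{t=1}^{j}\bigl([a_t\in A]-[b_t\in A]\bigr)$. Taking $E_0$ to be any $k$-set containing all the $a_t$ and none of the $b_t$ makes this product equal $1$ on $E_0$ and $(-1)^j$ on $R\setminus E_0$; evaluating the relation $\kappa u^{m-k-j}h=\mu_j u^{k-j}h$ at $E_0$ via the $u^{r}$-formula then gives $\mu_j=(-1)^j(m-k-j)!/(k-j)!$, so that
\[
\lambda_j=\frac{\mu_j}{(m-2k)!}=(-1)^j\frac{(m-k-j)!}{(k-j)!\,(m-2k)!}=(-1)^j\binom{m-k-j}{k-j}=(-1)^j\binom{n-k-j+1}{k-j},
\]
with multiplicity $\dim V_j=\binom{n+1}{j}-\binom{n+1}{j-1}$. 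The operator identities and the test-function evaluation are routine bookkeeping; the one genuinely load-bearing ingredient is the multiplicity-free decomposition of $C_k$ into distinct irreducible $S_{n+1}$-modules (equivalently, the eigenspace structure of the Johnson scheme), which must be established before Schur's lemma can be invoked, and is the step I expect to require the most care.
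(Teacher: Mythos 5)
Your argument is correct and complete. One should note, though, that the paper itself offers no proof of Theorem \ref{Kneser}: it simply points to the association-scheme computation in Godsil--Royle, Section 9.4. What you have written is a full, self-contained version of exactly that kind of argument, and every step checks out: the commutation relation $du-ud=(m-2t)I$, the injectivity range $j\le k\le m-j$ guaranteed by $k\le m/2$, the identity $(u^{r}f)(G)=r!\sum_{S\subseteq G}f(S)$, the factorization $M=\tfrac{1}{(m-2k)!}\kappa u^{m-2k}$ (note $\kappa$ is indeed $S_m$-equivariant), and the test-function evaluation giving $\mu_j=(-1)^j(m-k-j)!/(k-j)!$, hence $\lambda_j=(-1)^j\binom{m-k-j}{k-j}=(-1)^j\binom{n-k-j+1}{k-j}$. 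The sanity checks agree: $\lambda_0=\binom{n+1-k}{k}$ is the valency of the Kneser graph, and $\lambda_k=(-1)^k$. You even supply the multiplicities $\binom{n+1}{j}-\binom{n+1}{j-1}$, which the paper's statement omits. You are also right about where the real content lies: the multiplicity-free decomposition $C_k=\bigoplus_j V_j$ into pairwise non-isomorphic irreducibles is precisely the statement that $(S_{n+1},\,S_k\times S_{n+1-k})$ is a Gelfand pair, which is the same structural fact the paper leans on (for $k=2$) in its divisors-and-Gelfand-pairs discussion; your proof makes that dependence explicit for general $k$ and then extracts the eigenvalues by a clean one-dimensional intertwiner computation rather than by the front-divisor bookkeeping the paper uses in the $k=2$ case.
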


Since $2\le k\le n/2$, none of the eigenvalues in Theorem \ref{Kneser} is zero.
This raises the question whether there is an analogy with 
Theorem \ref{mainthm} for $2\le k\le n/2$, between the collection of 
the $\binom{n+1}{k}$ $(n-k)$-dimensional volumes of the $(n-k)$-faces of 
an $n$-simplex and the collection of all $(k-1)$-dimensional volumes of its $(k-1)$-faces.

As a final remark, we would like to point out that our original approach to this problem \cite{RivinArxive} used results about 
{\em divisors} \cite{cvetkovicspectra} 
(also known as {\em equitable partitions}~\cite{GodsilRoyle}) combined with the representation theory of the symmetric group and the notion of {\em Gelfand pairs} 
as developed in \cite{diaconisbook}.

\bibliographystyle{plain}

\end{document}